\pgfplotsset{compat=1.5}
\newtheorem{theorem}{Theorem}
\newtheorem{proposition}{Proposition}[section]
\newtheorem{corollary}[proposition]{Corollary}
\theoremstyle{remark}
\theoremstyle{definition}
\renewcommand{\MR}[1]{}
\newcommand{\TODO}[1]%
{\par\fbox{\begin{minipage}{0.9\linewidth}\textbf{TODO:} #1\end{minipage}}\par}
\DeclareSymbolFont{cmcal}{OMS}{cmsy}{m}{n}
\DeclareSymbolFontAlphabet{\mathcal}{cmcal}
\newcommand{\C}[0]{\mathbb{C}}
\newcommand{\R}[0]{\mathbb{R}}
\renewcommand{\P}[0]{\mathbb{P}}
\newcommand{\E}[0]{\mathbb{E}}
\newcommand{\V}[0]{\mathbb{V}}
\DeclarePairedDelimiter{\abs}{\lvert}{\rvert}
\newcommand{\startingNecklace}{
\tikz{
    \node[draw, circle, inner sep=1.2pt] (A) {};
    \node[draw, circle, fill, inner sep=1.2pt, yshift=-0.6em] (B) {};
    \draw (A) to[bend right] (B);
    \draw (B) to[bend right] (A);
  }
}
\title[]{The Necklace Process: A Generating Function Approach}
\author[B.~Hackl]{Benjamin Hackl}
\address[Benjamin Hackl]
{Institut f\"ur Mathematik,
  Alpen-Adria-Uni\-ver\-si\-t\"at Klagenfurt, Universit\"atsstra\ss e
  65--67, 9020 Klagenfurt, Austria}
\email{\href{mailto:benjamin.hackl@aau.at}{benjamin.hackl@aau.at}}
\thanks{B.~Hackl is supported by the Austrian
  Science Fund (FWF): P~28466-N35 and by the Karl Popper Kolleg
  ``Modeling-Simulation-Optimization'' funded by the Alpen-Adria-Universit\"at Klagenfurt
  and by the Carinthian Economic Promotion Fund (KWF). This paper has been written while
  he was a visitor at Stellenbosch University.}
\author[H.~Prodinger]{Helmut Prodinger}
\address[Helmut Prodinger]{Department of Mathematical
  Sciences, Stellenbosch University, 7602 Stellenbosch,
 South Africa}
\email{\href{mailto:hproding@sun.ac.za}{hproding@sun.ac.za}}
\keywords{Necklace process; bivariate generating function; quasi-power theorem}
\subjclass[2010]{60C05; 05A15, 05A16}
\begin{document}

\begin{abstract}
  The ``necklace process'', a procedure constructing necklaces of black and white beads by
  randomly choosing positions to insert new beads (whose color is uniquely determined
  based on the chosen location), is revisited. This article illustrates how, after
  deriving the corresponding bivariate probability generating function, the
  characterization of the asymptotic limiting distribution of the number of beads of a
  given color follows as a straightforward consequence within the analytic
  combinatorics framework.
\end{abstract}
\maketitle

\section{Introduction}

We consider the following process (illustrated in Figure~\ref{fig:inserting-beads}) for
constructing necklaces with two-colored beads:
\begin{itemize}
\item[--] We start with \startingNecklace, the necklace with one black and one white bead.
\item[--] New beads can be added between any two adjacent beads. The color of the new bead
  is determined by the color of those two beads: The new bead is white if and only if its
  two neighbors are black.
\end{itemize}
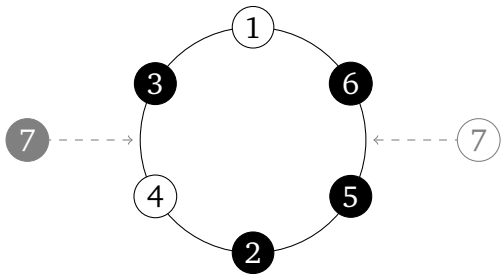
\begin{figure}[hb]
  \centering
  \begin{tikzpicture}
    \draw (0,0) circle (1.5cm);
    \draw (90: 1.5cm) node[draw, fill=white, circle, inner sep=2pt] {1};
    \draw (30: 1.5cm) node[draw, fill=black, text=white, circle, inner sep=2pt] {6};
    \draw (330: 1.5cm) node[draw, fill=black, text=white, circle, inner sep=2pt] {5};
    \draw (270: 1.5cm) node[draw, fill=black, text=white, circle, inner sep=2pt] {2};
    \draw (210: 1.5cm) node[draw, fill=white, text=black, circle, inner sep=2pt] {4};
    \draw (150: 1.5cm) node[draw, fill=black, text=white, circle, inner sep=2pt] {3};
    \draw[->, dashed, gray] (0:3cm) -- (0:1.6cm);
    \draw (0: 3cm) node[draw=gray, fill=white, text=gray, circle, inner sep=2pt] {7};
    \draw[->, dashed, gray] (180:3cm) -- (180:1.6cm);
    \draw (180: 3cm) node[draw=gray, fill=gray, text=white, circle, inner sep=2pt] {7};
  \end{tikzpicture}
  \caption{Construction of a necklace by the necklace process. The numbers on the beads
    correspond to the order in which they were inserted into the necklace.}
  \label{fig:inserting-beads}
\end{figure}

Motivated by a simple network communication model, this ``necklace process'' was analyzed
in~\cite{Mallows-Shepp:2008:necklace-process}. Further variants of this process were
discussed in~\cite{Nakata:2009:necklace-polya}, where an elegant approach using P\'olya urns to model the
edges connecting the beads was pursued. The parameters investigated in these
articles are the number of white beads in a random necklace of size $n$ (i.e., consisting
of $n$ beads), as well as the number of runs of black beads of given length.

The purpose of this brief note is to provide an alternative access to the analysis of the
number of beads of a given color in the necklace process by focusing on an appropriate generating
function and using tools from analytic combinatorics. A similar approach has been
successfully employed in, for example,
\cite{Morcrette:2012:analyzing-algebraic-polya,Morcrette-Mahmoud:2012:balanced-tenable-urns}.

In Section~\ref{sec:number-of-necklaces} we briefly discuss the combinatorial structure
surrounding the necklace process and elaborate how many different necklaces of given size
(i.e., consisting of a given amount of beads) can be constructed.
Then, in Section~\ref{sec:white-beads} we analyze the number of white and black beads in a
random necklace of given size. Our main result is given in Theorem~\ref{thm:explicit-bgf}, which
is an explicit formula for the bivariate probability generating function with respect to
the number of white beads---which has a surprisingly nice closed form.
Apart from some additional remarks on the structure of this
generating function, we then show in Corollaries~\ref{cor:white-beads:asy}
and~\ref{cor:black-beads:asy} how the qualitative results concerning the number of
black and white beads obtained in~\cite{Mallows-Shepp:2008:necklace-process} (expectation, variance,
limiting distribution) are a straightforward consequence of the explicitly known bivariate
generating function.

\section{Number of Necklaces}\label{sec:number-of-necklaces}
Before diving straight into the analysis of the number of beads of a given color, for the
sake of completeness, we briefly discuss the combinatorial structure of the objects we are
constructing.

While it is rather easy to see that there are $(n-1)!$ possible necklace
constructions\footnote{Starting with \startingNecklace, the necklace of size $2$, there
  are $2$ possible positions for a new bead. In the new necklace there are now $3$
  positions to choose from. Inductively, this proves that there are
  possible $(n-1)!$ construction processes for necklaces with $n$ beads.} for
a necklace of size $n$, many of those constructions yield the same necklace. Note that the
number of different necklaces of size $n$ is enumerated by sequence
\href{http://oeis.org/A000358}{A000358} in \cite{OEIS:2015}. We use the analytic
combinatorics framework in order to analyze this quantity in detail.

\begin{proposition}\label{prop:number-of-necklaces}
  Let $\mathcal{N}$ be the combinatorial class containing all different necklaces constructed by the
  necklace process. The corresponding ordinary generating function $N(z)$ enumerating these necklaces
  with respect to size is given by
  \begin{equation}\label{eq:number-gf}
    N(z) = \sum_{k\geq 1} \frac{\varphi(k)}{k} \log\Bigg(\frac{1 - z^{k}}{1 - z^{k}-
      z^{2k}}\Bigg),
  \end{equation}
  where $\varphi(k)$ is Euler's totient function.
  Asymptotically, the number of necklaces of size $n$ is given by
  \begin{equation}\label{eq:number}
    [z^{n}] N(z) = \Bigg(\frac{1 + \sqrt{5}}{2}\Bigg)^{n} n^{-1} + O\Bigg(\Bigg(\frac{1 +
      \sqrt{5}}{2}\Bigg)^{n/2} n^{-1}\Bigg).
  \end{equation}
\end{proposition}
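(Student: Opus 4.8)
\emph{Proof proposal.} The statement splits into a purely combinatorial part (identifying $\mathcal{N}$ and establishing \eqref{eq:number-gf}) and an analytic part (deducing \eqref{eq:number}). For the combinatorial part the plan is first to extract the structural invariant of the process. Since a new bead is white if and only if both of its neighbours are black, every white bead is inserted flanked by two black beads, and beads are never recoloured; hence \emph{no two white beads are ever adjacent}, and every reachable necklace has at least one bead of each colour. Conversely, I would show that every binary necklace with no two adjacent white beads and at least one bead of each colour is reachable, by induction on the size: if there are at least two white beads, any white bead has two black neighbours (by the invariant) and may be deleted as a legal last insertion without creating adjacent whites; if there is a single white bead the necklace has the form $WB^{\,n-1}$, and one instead deletes a black bead adjacent to the white one. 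In either case the size drops by one and the invariant is preserved, so the claim follows. This identifies $\mathcal{N}$ with precisely this class of necklaces.

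To pass to $N(z)$, I would cut each necklace immediately before every white bead; the resulting blocks are exactly the words $WB^{\,j}$ with $j\geq 1$, so the block class has ordinary generating function $B(z)=z^{2}+z^{3}+\cdots=z^{2}/(1-z)$, and a necklace is a cyclic sequence of such blocks. The cycle construction $\mathcal{N}=\mathrm{CYC}(\mathcal{B})$ then gives, using $1-B(z)=(1-z-z^{2})/(1-z)$,
\[
  N(z)=\sum_{k\geq 1}\frac{\varphi(k)}{k}\log\frac{1}{1-B(z^{k})}
      =\sum_{k\geq 1}\frac{\varphi(k)}{k}\log\frac{1-z^{k}}{1-z^{k}-z^{2k}},
\]
which is exactly \eqref{eq:number-gf}. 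For the coefficients I would exploit the clean factorisation
\[
  1-z-z^{2}=(1-z/\rho)(1+\rho z),\qquad \rho=\tfrac{\sqrt5-1}{2}=\Bigl(\tfrac{1+\sqrt5}{2}\Bigr)^{-1},
\]
so that $\log\frac{1-w}{1-w-w^{2}}=\log(1-w)-\log(1-w/\rho)-\log(1+\rho w)$ expands termwise; reading off $[w^{m}]$ and substituting $w=z^{k}$ (which contributes to $[z^{n}]$ only when $k\mid n$) yields the exact divisor sum
\[
  [z^{n}]N(z)=\frac1n\sum_{k\mid n}\varphi(k)\Bigl(\rho^{-n/k}-1+(-1)^{n/k}\rho^{n/k}\Bigr).
\]

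The asymptotic \eqref{eq:number} is then immediate: the divisor $k=1$ contributes the dominant $\rho^{-n}/n=\bigl(\tfrac{1+\sqrt5}{2}\bigr)^{n}n^{-1}$, the largest remaining contribution comes from $k=2$ with size $\rho^{-n/2}/n=\bigl(\tfrac{1+\sqrt5}{2}\bigr)^{n/2}n^{-1}$, and the finitely many divisors $k\geq 3$ are each bounded by $\varphi(k)\rho^{-n/k}/n\leq \rho^{-n/3}$, which is negligible against the error term. Note that, in contrast to a direct singularity analysis of the infinite sum $\sum_{k\geq 1}$, this route never requires controlling a convergent tail, because for fixed $n$ the coefficient sum is finite. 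Accordingly, the step I expect to be the main obstacle is not the asymptotics but the \emph{converse reachability} in the combinatorial characterisation: verifying that the removable-bead argument really produces, at every stage, a legal predecessor necklace satisfying both the no-adjacent-white invariant and the at-least-one-of-each-colour condition. Once $\mathcal{N}$ is pinned down, the cycle construction and the explicit coefficient extraction are routine.
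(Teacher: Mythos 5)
Your proof is correct, and it is worth separating its two halves when comparing it to the paper. The combinatorial half coincides with the paper's: your blocks $WB^{j}$, $j\geq 1$, obtained by cutting before each white bead, are precisely the paper's specification $\mathcal{N}=\operatorname{Cyc}(\text{white}\times\text{black}^{+})$, and the unlabeled cycle construction then yields \eqref{eq:number-gf} in both cases. What you add---the invariant (no two adjacent white beads, both colours present) together with the deletion-based induction showing every such necklace is reachable by the process---is detail the paper silently skips when it asserts the specification, so this is a gain in rigour rather than a divergence. The asymptotic half is where you genuinely part ways: the paper runs singularity analysis on \eqref{eq:number-gf}, identifying the dominant singularity $z=(-1+\sqrt5)/2$ coming from the $k=1$ summand and the next-closest one $z=\sqrt{(-1+\sqrt5)/2}$ from the $k=2$ summand, whereas you never leave the realm of exact formulas: the factorisation $1-w-w^{2}=(1-w/\rho)(1+\rho w)$ with $\rho=(\sqrt5-1)/2$ gives the closed divisor sum
\[
[z^{n}]N(z)=\frac{1}{n}\sum_{k\mid n}\varphi(k)\Bigl(\rho^{-n/k}-1+(-1)^{n/k}\rho^{n/k}\Bigr),
\]
equivalently $\frac{1}{n}\sum_{k\mid n}\varphi(k)(L_{n/k}-1)$ with Lucas numbers $L_{m}$, which is the known formula for A000358, and \eqref{eq:number} follows by elementary estimates. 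Your route buys an exact enumeration formula and sidesteps the analytic points the paper's route must implicitly handle (uniform control of the infinite sum over $k$ near the dominant singularity, so that the tail contributes no further singularities); the paper's route is the one that survives when the denominator admits no such clean factorisation. One small repair to your final step: the divisors $k\geq 3$ of $n$ are not ``finitely many'' uniformly in $n$---their number grows with $n$---but since there are at most $n$ of them and each term is bounded by your estimate $\rho^{-n/3}$ (plus an $O(1)$ remainder), the total is $O\bigl(n\rho^{-n/3}\bigr)=o\bigl(\rho^{-n/2}n^{-1}\bigr)$, so your conclusion is unaffected.
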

\begin{proof}
  The generating function \eqref{eq:number-gf} can directly be obtained by means of the
  machinery provided by the symbolic method (see Chapter~I and in particular Theorem I.1
  in \cite{Flajolet-Sedgewick:ta:analy}). In fact, we can construct the combinatorial
  class $\mathcal{N}$ as
  \[ \mathcal{N} = \operatorname{Cyc}(\tikz[baseline=-0.3em]{\node[draw, circle, inner
      sep=3pt] {};} \times \tikz[baseline=-0.25em]{\node[draw, circle, fill, inner sep=3pt]
      {};}^{+}),  \]
  where $\tikz[baseline=-0.3em]{\node[draw, circle, inner sep=3pt] {};}$ and
  $\tikz[baseline=-0.3em]{\node[draw, circle, fill, inner sep=3pt] {};}^{+}$ represent the
  combinatorial classes for a single white and a non-empty sequence of black beads,
  respectively. Translating the construction of the combinatorial class $\mathcal{N}$ into
  the language of generating functions then immediately yields~\eqref{eq:number-gf}.

  In order to obtain the asymptotic growth of the coefficients of $N(z)$, we use
  singularity analysis (see~\cite{Flajolet-Odlyzko:1990:singul}, \cite[Chapter
  VI]{Flajolet-Sedgewick:ta:analy}), which requires us to identify the dominant
  singularities of $N(z)$, i.e., the singularities with minimal modulus.

  In fact, by observing that all $\zeta \in \C$ satisfying $\zeta^{k} = \frac{-1 \pm
    \sqrt{5}}{2}$  are roots of $1 - z^{k} - z^{2k} = 0$, it is easy to see that $N(z)$ has
  a unique dominant singularity located at $z = \frac{-1 + \sqrt{5}}{2}$ which comes from
  the first summand of $N(z)$ in~\eqref{eq:number-gf}. Extracting the coefficient growth
  provided by the first summand and observing that the singularity with the next-larger modulus
  is located at $z = \sqrt{(-1 + \sqrt{5}) / 2}$ (and comes from the second summand), we
  obtain~\eqref{eq:number}.
\end{proof}

\section{Beads of Equal Color}\label{sec:white-beads}

Let $n\geq 2$ and let $W_{n}$ and $B_{n}$ denote the random variables modeling the number
of white and black beads in a necklace of size $n$ that is constructed uniformly at
random, respectively.

The fact that $W_{n} + B_{n} = n$ allows us to concentrate our investigation on
$W_{n}$. Results from the characterization of $W_{n}$ can be translated directly to
$B_{n}$. Let $W(z,u)$ denote the shifted bivariate probability generating function
corresponding to $W_{n}$, that is
\[ W(z,u) = \sum_{n,k\geq 1} \P(W_{n} = k) z^{n-1} u^{k}.  \]
In contrast to previous works on the necklace process, we give an explicit formula for the
bivariate probability generating function $W(z,u)$.

\begin{theorem}\label{thm:explicit-bgf}
  The shifted bivariate probability generating function $W(z,u)$ corresponding to the
  random variables $W_{n}$ modeling the number of white beads in a random necklace of size
  $n$ is given by
  \begin{align}
    W(z,u) &= \frac{u}{\sqrt{1-u} \coth(z\sqrt{1-u}\,) - 1}, \label{eq:bgf}
  \intertext{or, equivalently, by}
    W(z,u) &= u \frac{\exp(z\alpha) - \exp(-z\alpha)}{\exp(z\alpha) (\alpha - 1) + \exp(-z\alpha)
             (\alpha + 1)} = u\frac{\exp(2z \alpha) - 1}{\exp(2z\alpha)(\alpha - 1) +
             \alpha + 1}, \label{eq:bgf-symmetric}
  \end{align}
  where $\alpha := \sqrt{1-u}$.
\end{theorem}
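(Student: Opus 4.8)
The plan is to reduce the process to a one-dimensional Markov chain on the number of white beads, extract a recurrence for the distribution, translate it into a partial differential equation for $W(z,u)$, and solve that equation by the method of characteristics. The structural observation that makes this work is that a white bead is inserted only between two black beads, so every white bead is born with two black neighbours, and since beads are never removed, no two white beads can ever become adjacent. Consequently, in a necklace of size $n$ with $W$ white beads there are no white--white gaps, exactly $2W$ mixed (black--white) gaps, and hence $n-2W$ black--black gaps. A new bead is white precisely when it lands in a black--black gap, so conditionally on $(n,W)$ a white bead is added with probability $(n-2W)/n$ and a black bead with probability $2W/n$; in particular the pair $(n,W)$ alone determines the transition. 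Writing $p_{n,k} = \P(W_n = k)$, this yields
\[ p_{n+1,k} = \frac{n-2k+2}{n}\, p_{n,k-1} + \frac{2k}{n}\, p_{n,k}, \]
with initial data $p_{2,1} = 1$.

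Next I would pass to the probability generating functions $P_n(u) = \E[u^{W_n}] = \sum_k p_{n,k} u^k$. Multiplying the recurrence by $u^k$, summing over $k$, and recognising $\sum_k k\, p_{n,k}u^k = u P_n'(u)$ converts it into $n P_{n+1}(u) = n u P_n(u) + 2u(1-u) P_n'(u)$. Assembling these identities into the series $W(z,u) = \sum_{n\ge 2} P_n(u) z^{n-1}$ (so that $W(0,u)=0$ and the coefficient of $z$ is $P_2(u)=u$), the factor $n$ becomes a $z$-derivative while the $u$-derivative is inherited, producing the first-order linear PDE
\[ (1-uz)\,\partial_z W - 2u(1-u)\,\partial_u W = u\,(1 + W). \]

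The heart of the argument---and the step I expect to be the main obstacle---is solving this PDE. I would apply the method of characteristics with $\alpha := \sqrt{1-u}$. Although the characteristic system couples $z$ and $u$, the equation $dz/du = (1-uz)/(-2u(1-u))$ is linear in $z$ and integrates using the factor $\alpha$, giving the invariant $\xi = \alpha z - \tfrac12\log\frac{1+\alpha}{1-\alpha}$; along the same characteristics $(1+W)/\alpha$ is constant, so the general solution has the form $W = \alpha\, F(\xi) - 1$ for some function $F$. Imposing $W(0,u)=0$ forces $F$ at $\xi_0 = -\tfrac12\log\frac{1+\alpha}{1-\alpha}$ to equal $1/\alpha$; inverting the relation between $\xi_0$ and $\alpha$ gives $\alpha = -\tanh(\xi_0)$ and hence $F = -\coth$. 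Substituting back and simplifying $\coth\big(\alpha z - \tfrac12\log\frac{1+\alpha}{1-\alpha}\big)$ through the subtraction law for $\coth$, together with $\alpha^2 - 1 = -u$, collapses the expression to \eqref{eq:bgf}, and writing $\coth$ via exponentials yields the equivalent forms in \eqref{eq:bgf-symmetric}. As a safeguard against the unavoidable algebra, I would conclude by checking directly that the closed form satisfies both the PDE and $W(0,u)=0$; uniqueness of the formal power-series solution then confirms the identity.
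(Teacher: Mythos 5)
Your proposal is correct and follows essentially the same route as the paper: the Markov-chain transition probabilities $2k/n$ and $(n-2k)/n$, the recurrence for $\P(W_n=k)$, its translation into the PDE $(1-uz)\partial_z W = 2u(1-u)\partial_u W + uW + u$ with $W(0,u)=0$, and a solution by the method of characteristics. The only difference is one of detail, not of substance: the paper merely asserts that solving the PDE (by characteristics or a CAS) yields \eqref{eq:bgf}, whereas you carry out the characteristic computation explicitly---and correctly, since the invariant $\alpha z - \operatorname{artanh}(\alpha)$ together with the constancy of $(1+W)/\alpha$ and the subtraction law for $\coth$ does collapse to $u/(\alpha\coth(\alpha z)-1)$.
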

\begin{proof}
  Analogously to the approach in~\cite{Mallows-Shepp:2008:necklace-process} we also see
  the number of white beads in the context of a Markov chain with properties
  \begin{equation}\label{eq:markov}
    \P(W_{n+1} = k \mid W_{n} = k) = \frac{2 k}{n}, \qquad \P(W_{n+1} = k+1 \mid W_{n} =
    k) = 1 - \frac{2k}{n}.
  \end{equation}
  This is because when choosing the position for the new bead uniformly at random among
  all possible $n$ positions, the color of the bead to be inserted is white (which would
  increase the total number of white beads) if and only if
  we chose one of the positions not adjacent to a white bead. Given that there are $k$
  white beads among the $n$ beads, there are $n - 2k$ such positions.

  Let $w_{n,k} := \P(W_{n} = k)$ and let $w_{n}(u)$ denote the probability generating
  function for $W_{n}$. With the help
  of~\eqref{eq:markov} and the law of total probability we find
  \begin{align}\label{eq:prob-recurrence}
    \begin{split}
      w_{n+1,k} & = \P(W_{n+1} = k \mid W_{n} = k) w_{n,k} + \P(W_{n+1} = k \mid W_{n} = k-1)
      w_{n,k-1} \\
      & = \frac{2}{n} k w_{n,k} - \frac{2}{n} (k-1) w_{n,k-1} + w_{n,k-1}.
    \end{split}
  \end{align}
  It is interesting to note that structurally, these probabilities $w_{n,k}$ are strongly
  connected to Eulerian numbers (see~\cite[Section~5.1.3.]{Knuth:1998:Art:3}): By setting
  $e_{n,k} := w_{n,k}/(n-1)!$ in~\eqref{eq:prob-recurrence} we obtain
  \[ e_{n,k} = 2k e_{n-1, k} + (n+1 - 2k) e_{n-1, k-1},  \]
  which strongly resembles the recurrence for Eulerian numbers as given in
  \cite[5.1.13.(2)]{Knuth:1998:Art:3}.

  By multiplication of~\eqref{eq:prob-recurrence} with $u^{k}$ and summing over $k$ this translates to
  \begin{equation}\label{eq:recurrence}
    w_{n+1}(u) = \frac{2 u (1-u)}{n} w_{n}'(u) + u w_{n}(u)
  \end{equation}
  for $n \geq 2$ with initial value $w_{2}(u) = u$. Note that the bivariate probability
  generating function $W(z,u)$ can be expressed by the $w_{n}(u)$ by means of $W(z,u) =
  \sum_{n\geq 2} w_{n}(u) z^{n-1}$. After multiplying~\eqref{eq:recurrence} with $z^{n-1}$ and
  summation over $n\geq 2$ we find
  that $W(z,u)$ satisfies the first order linear partial differential equation
  \begin{equation}\label{eq:pde}
    \partial_{z} W(z,u) (1 - zu) = 2u(1-u) \partial_{u} W(z,u) + u W(z,u) +
    u,
  \end{equation}
  with the condition $W(0,u) = 0$. Solving this PDE (e.g., by means of the method of
  characteristics, or with the help of a computer algebra system)
  yields~\eqref{eq:bgf}. The alternate form~\eqref{eq:bgf-symmetric} follows from
  rewriting $\coth(z) = \frac{\exp(z) + \exp(-z)}{\exp(z) - \exp(-z)}$.
\end{proof}

Because of the particularly nice shape of the bivariate probability generating function
$W(z,u)$ we are able to use the machinery around Hwang's quasi-power theorem
(see~\cite{Hwang:1998}, \cite[Section IX.7]{Flajolet-Sedgewick:ta:analy}) in order to
find the characterization of $W_{n}$ from~\cite{Mallows-Shepp:2008:necklace-process} as an
immediate corollary.

\begin{corollary}[{\cite[Section
    3]{Mallows-Shepp:2008:necklace-process}}]\label{cor:white-beads:asy}
  The expected number of white beads in a necklace of size $n$ constructed uniformly at
  random and the corresponding variance are given by
  \begin{equation}\label{eq:number-exp-var}
    \E W_{n} = \frac{n}{3}\qquad \text{ and }\qquad \V W_{n} = \frac{2n}{45}
  \end{equation}
  for $n \geq 6$.
  Furthermore, $W_{n}$ is asymptotically normally distributed in the sense that for all
  $x\in \R$ we have
  \begin{equation}\label{eq:beads-normal}
    \P\Bigg(\frac{W_{n} - n/3}{\sqrt{2n/45}} \leq x\Bigg) = \frac{1}{\sqrt{2\pi}}
    \int_{-\infty}^{x} e^{-t^{2}/2}~dt + O(n^{-1/2}).
  \end{equation}
\end{corollary}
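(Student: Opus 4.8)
The plan is to read the distribution of $W_n$ straight off the explicit generating function of Theorem~\ref{thm:explicit-bgf} by feeding it into Hwang's quasi-power theorem. The key observation is that $w_n(u) = [z^{n-1}]W(z,u)$ is exactly the probability generating function $\E u^{W_n}$, so everything reduces to the coefficient asymptotics of $W(z,u)$ in $z$ as $n\to\infty$, with $u$ allowed to vary in a neighborhood of $1$. For each fixed $u$ the function $z\mapsto W(z,u)$ is meromorphic, so the relevant data is its dominant singularity, which will turn out to be a simple pole.

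First I would locate that pole. Writing $\alpha=\sqrt{1-u}$, the denominator in~\eqref{eq:bgf} vanishes when $\alpha\coth(z\alpha)=1$, that is, at
\[
  \rho(u) = \frac{1}{2\alpha}\log\frac{1+\alpha}{1-\alpha}
  = \sum_{j\geq 0}\frac{(1-u)^{j}}{2j+1}
  = 1 + \frac{1-u}{3} + \frac{(1-u)^{2}}{5} + \cdots.
\]
This series does double duty: it shows that the apparent branch point at $u=1$ is spurious (numerator and denominator of $W$ are both even in $\alpha$, hence analytic in $\alpha^{2}=1-u$), so $\rho(u)$ is analytic at $u=1$ with $\rho(1)=1$; and it makes the location explicit. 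The remaining solutions of the pole equation are $\rho(u)+ik\pi/\alpha$ for $k\in\Z\setminus\{0\}$, all of strictly larger modulus, so $\rho(u)$ is the unique dominant singularity and the pole is simple, with a separation that is in fact large near $u=1$.

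With the pole isolated, meromorphic singularity analysis (or a direct residue computation) yields the quasi-power estimate
\[
  \E u^{W_n} = w_n(u) = A(u)\,\rho(u)^{-(n-1)}\bigl(1 + O(\kappa_n^{-1})\bigr),
\]
uniformly for $u$ in a fixed complex neighborhood of $1$, with $A$ analytic and $\kappa_n\to\infty$. Hwang's theorem then applies with $B(u)=1/\rho(u)$: writing $m(u)=\log B(u)=-\log\rho(u)$, the limiting mean and variance constants are $m'(1)$ and $m''(1)+m'(1)$. From the expansion above one reads off $\rho'(1)=-\tfrac13$ and $\rho''(1)=\tfrac25$, so $m'(1)=\tfrac13$ and $m''(1)+m'(1)=-\tfrac25+\tfrac19+\tfrac13=\tfrac{2}{45}$; this reproduces the leading orders $n/3$ and $2n/45$, and since the variance constant is strictly positive the theorem delivers the Gaussian limit law~\eqref{eq:beads-normal} together with its $O(n^{-1/2})$ speed.

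Finally, to sharpen the asymptotic mean and variance into the exact identities in~\eqref{eq:number-exp-var}, I would extract the factorial moments directly from the recurrence~\eqref{eq:recurrence}. Differentiating once and setting $u=1$ gives $\mu_{n+1}=(1-2/n)\mu_n+1$, which admits $\mu_n=n/3$ as an exact solution once the initial values have settled; a second differentiation produces a companion recurrence for $\E[W_n(W_n-1)]$ and hence the exact value $\V W_n=2n/45$ from $n\geq 6$ onward. The main obstacle is the uniformity needed in the third paragraph: one has to verify that $\rho(u)$ remains the strictly dominant simple pole throughout a genuine two-dimensional neighborhood of $u=1$ and that the transfer error is uniform there, since this is precisely the hypothesis the quasi-power theorem cannot supply on its own.
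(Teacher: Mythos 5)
Your proposal is correct, and it follows the same overall strategy as the paper---read the Gaussian limit off the explicit bivariate generating function of Theorem~\ref{thm:explicit-bgf} via a quasi-power argument, and establish the exact mean and variance by a separate exact computation---but the mechanics differ at two points in ways worth comparing. For the limit law, the paper never locates the moving pole explicitly: it decomposes $zW(z,u)=B(z,u)/C(z,u)$ with $B(z,u)=z^{2}u$ and $C(z,u)=z(\sqrt{1-u}\coth(z\sqrt{1-u})-1)$ and invokes the packaged meromorphic scheme of \cite[Theorem IX.12]{Flajolet-Sedgewick:ta:analy}, whose hypotheses (analyticity of $C$ near $(1,1)$ because $\coth$ is odd, the simple root of $C(z,1)=1-z$ at $\rho=1$ with $B(\rho,1)\neq 0$, non-degeneracy $\partial_{z}C(\rho,1)\cdot\partial_{u}C(\rho,1)=1/3\neq 0$, and positive variance) are precisely what disposes of the uniformity issue you flag at the end as the main obstacle. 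You instead solve the pole equation in closed form, $\rho(u)=\frac{1}{2\alpha}\log\frac{1+\alpha}{1-\alpha}=\sum_{j\geq 0}\frac{(1-u)^{j}}{2j+1}$ with $\alpha=\sqrt{1-u}$, which is correct (and the companion poles $\rho(u)+ik\pi/\alpha$ do recede to infinity as $u\to 1$, so dominance and separation are clear); this buys you the constants directly, since $\rho'(1)=-\tfrac13$ and $\rho''(1)=\tfrac25$ yield the mean and variance constants $\tfrac13$ and $\tfrac{2}{45}$, whereas the paper certifies its \emph{variability} hypothesis by leaning on the separately computed exact variance. The price of your route is that the uniform meromorphic transfer must be carried out by hand---routine via Cauchy estimates on a fixed circle of radius strictly between $|\rho(u)|$ and the moduli of the companion poles, uniformly for $u$ near $1$---and you have correctly identified this as the one step that needs an argument rather than a citation. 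For the exact identities \eqref{eq:number-exp-var}, the paper differentiates the closed form of $W(z,u)$ at $u=1$ and extracts coefficients of the resulting rational functions of $z$, while you differentiate the recurrence \eqref{eq:recurrence} to get $\mu_{n+1}=(1-2/n)\mu_{n}+1$ and its second-order companion; both are exact and give $\E W_{n}=n/3$ for $n\geq 3$ and $\V W_{n}=2n/45$ for $n\geq 6$, so either variant completes the proof.
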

\begin{proof}
  This explicit form of the bivariate probability generating function allows us to use
  standard techniques from analytic combinatorics in order to obtain the expected value
  $\E W_{n}$ and the variance $\V W_{n}$. By construction, $\E W_{n}$ is the coefficient
  of $z^{n-1}$ in $\partial_{u} W(z,1)$. We find
  \[ \partial_{u} W(z,1) = \frac{(z^{2} - 3z - 3) z}{3 (1 - z)^{2}} = z +
    \sum_{n\geq 3} \frac{n}{3} z^{n-1},  \]
  which proves $\E W_{n} = n/3$ for $n\geq 3$. Similarly, we can use the second partial
  derivative of $W(z,u)$ with respect to $u$ in order to extract the second factorial
  moment, $\E(W_{n}(W_{n}-1))$. Together with the well-known identity
  \[ \V W_{n} = \E(W_{n}(W_{n}-1)) + \E W_{n} - (\E W_{n})^{2}  \]
  this allows to verify that $\V W_{n} = 2n/45$ for $n\geq 6$.

  Finally, the asymptotically normal limiting distribution is obtained immediately by
  using the explicit formula for $F(z,u) := zW(z,u)$ and applying~\cite[Theorem
  IX.12]{Flajolet-Sedgewick:ta:analy}. The corresponding necessary conditions are all
  checked easily:
  \begin{itemize}
  \item[--] \emph{Analytic perturbation.} We have
    \[ A(z,u) = 0 ,\quad B(z,u) = z^{2}u,\quad C(z,u)
      = z(\sqrt{1 - u}\coth(z\sqrt{1-u}) - 1) \] and $\alpha = 1$. $A(z,u)$ and $B(z,u)$ are
    obviously entire functions, and $C(z,u)$ is analytic in the domain $\{z\in \C :
    \abs{z} < 2\pi\} \times \{u\in \C : \abs{u-1} < \varepsilon\}$ for some $\varepsilon >
    0$. To see this, observe that $\coth(z)$ is an odd function with Laurent expansion
    \[ \coth(z) = \frac{1}{z} + \frac{z}{3} - \frac{z^{3}}{45} + \frac{2z^{5}}{945} +
      O(z^{7}),  \]
    which means that only even powers of $\sqrt{1-u}$ occur. Hence, there is no branching
    point of the square root at $u = 1$. Also, $C(z,1) = 1-z$ has a unique simple root at
    $\rho = 1$ with $B(\rho,1) = 1 \neq 0$.
  \item[--] \emph{Non-degeneracy.} It is straightforward to check $\partial_{z} C(\rho,1)
    \cdot \partial_{u}C(\rho, 1) = 1/3 \neq 0$.
  \item[--] \emph{Variability.} By explicitly computing the variance above we already
    computed that the linear term does not vanish.
  \end{itemize}
  This proves~\eqref{eq:beads-normal} and concludes this proof.
\end{proof}

As mentioned above, the characterization $W_{n}$ can immediately be carried over to
$B_{n}$. Rewriting $B_{n} = n - W_{n}$ proves the following result.
\begin{corollary}\label{cor:black-beads:asy}
  The expected number of black beads in a necklace of size $n$ constructed uniformly at
  random and the corresponding variance are given by
  \begin{equation}\label{eq:black-bleads-exp-var}
    \E B_{n} = \frac{2n}{3},\qquad \text{ and }\qquad \V B_{n} = \frac{2n}{45}
  \end{equation}
  for $n\geq 6$. Furthermore, $B_{n}$ is asymptotically normally distributed.
\end{corollary}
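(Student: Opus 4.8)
The plan is to leverage the deterministic identity $B_n = n - W_n$ so that every assertion follows directly from Corollary~\ref{cor:white-beads:asy}. Since $n$ is a fixed constant for each $B_n$, we are merely applying an affine transformation to $W_n$; under such a transformation the mean shifts predictably, the variance is invariant, and a Gaussian limit is preserved. The whole proof is therefore a transfer argument rather than a new computation.

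First I would obtain the expectation by linearity, giving $\E B_n = n - \E W_n = n - n/3 = 2n/3$ for $n \geq 6$ (indeed wherever $\E W_n = n/3$ is valid). For the variance, I would use that neither the additive constant $n$ nor the overall sign of $W_n$ affects the dispersion, so that $\V B_n = \V(n - W_n) = \V W_n = 2n/45$ for $n \geq 6$.

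Finally, for the limiting distribution, I would observe that the standardized version of $B_n$ coincides with the negative of the standardized version of $W_n$, since
\[ \frac{B_n - 2n/3}{\sqrt{2n/45}} = \frac{n/3 - W_n}{\sqrt{2n/45}} = -\frac{W_n - n/3}{\sqrt{2n/45}}. \]
Because the right-hand side converges to a standard normal distribution by~\eqref{eq:beads-normal}, and that distribution is symmetric about the origin, its negative converges to the same standard normal law. This establishes the asymptotic normality of $B_n$ with the same normalization.

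There is essentially no genuine obstacle here: the corollary is a mechanical consequence of the affine relationship $B_n = n - W_n$ together with the already-established results for $W_n$. The only point worth stating explicitly is that affine transformations preserve Gaussian limits, with the symmetry of the normal distribution absorbing the sign flip.
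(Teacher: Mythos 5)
Your proposal is correct and matches the paper's argument exactly: the paper proves this corollary in one line by noting that rewriting $B_n = n - W_n$ transfers the characterization of $W_n$ from Corollary~\ref{cor:white-beads:asy} to $B_n$. Your write-up simply makes explicit the routine details (linearity of expectation, invariance of variance under affine maps, and the sign flip being absorbed by the symmetry of the Gaussian) that the paper leaves implicit.
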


As a side effect of Theorem~\ref{thm:explicit-bgf} we are also able to extract more
information on the probability generating functions $w_{n}(u)$.
\begin{corollary}
  Let $\alpha = \sqrt{1 - u}$. The shifted probability generating functions $w_{n}(u)/u$
  can be expressed by means of even polynomials $r_{n}(\alpha)$ satisfying the recurrence
  relation
  \begin{equation}\label{eq:prob-recurrence-2}
    r_{n}(\alpha) = \frac{(2\alpha)^{n-2}}{(n-1)!} + (1-\alpha) \sum_{k=0}^{n-2}
    \frac{(2\alpha)^{k}}{(k+1)!} r_{n-1-k}(\alpha),
  \end{equation}
  for $n \geq 2$ with initial value $r_{1}(\alpha) = 0$.
\end{corollary}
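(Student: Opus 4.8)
The plan is to read off the recurrence as a coefficient extraction from a functional equation satisfied by the closed form in Theorem~\ref{thm:explicit-bgf}. Set $r_{n}(\alpha) := w_{n}(u)/u$ and $R(z) := W(z,u)/u$; since $W(z,u) = \sum_{n\geq 2} w_{n}(u) z^{n-1}$, this gives $R(z) = \sum_{n\geq 1} r_{n}(\alpha) z^{n-1}$ with $r_{1}(\alpha) = 0$. Introducing the auxiliary series
\[ P(z) := \frac{\exp(2z\alpha) - 1}{2\alpha} = \sum_{j\geq 1} \frac{(2\alpha)^{j-1}}{j!}\, z^{j}, \]
I observe that $[z^{n-1}] P(z) = \frac{(2\alpha)^{n-2}}{(n-1)!}$ matches the first summand of~\eqref{eq:prob-recurrence-2}, while the Cauchy product yields $[z^{n-1}]\bigl(P(z) R(z)\bigr) = \sum_{k=0}^{n-2} \frac{(2\alpha)^{k}}{(k+1)!} r_{n-1-k}(\alpha)$, which is exactly the sum appearing there. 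Hence~\eqref{eq:prob-recurrence-2} is equivalent to the single functional equation
\[ R(z) = P(z)\bigl(1 + (1-\alpha) R(z)\bigr), \]
read off coefficientwise for $n\geq 2$.

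The heart of the proof is to verify this functional equation directly from the symmetric form~\eqref{eq:bgf-symmetric}. Abbreviating $E := \exp(2z\alpha)$, so that $R(z) = \frac{E-1}{E(\alpha-1)+\alpha+1}$ and $P(z) = \frac{E-1}{2\alpha}$, I would place $1 + (1-\alpha) R(z)$ over the common denominator $E(\alpha-1)+\alpha+1$; its numerator $E(\alpha-1)+\alpha+1+(1-\alpha)(E-1)$ has its two $E$-terms cancel and collapses to the constant $2\alpha$. Thus $1+(1-\alpha)R(z) = \frac{2\alpha}{E(\alpha-1)+\alpha+1}$, and multiplying by $P(z) = \frac{E-1}{2\alpha}$ returns $R(z)$, as required. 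Comparing the coefficients of $z^{0}$ on both sides (both vanish) recovers the initial value $r_{1}(\alpha) = 0$, and extracting $[z^{n-1}]$ for $n\geq 2$ produces~\eqref{eq:prob-recurrence-2}.

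Finally, to see that each $r_{n}(\alpha)$ is an even polynomial, I use that $1 \leq W_{n} \leq n$, so $w_{n}(u) = \sum_{k\geq 1} w_{n,k} u^{k}$ is a polynomial in $u$ divisible by $u$; hence $r_{n} = w_{n}(u)/u$ is a polynomial in $u = 1 - \alpha^{2}$, and therefore an even polynomial in $\alpha$. Equivalently, one checks from~\eqref{eq:bgf-symmetric} --- after clearing $E$ in numerator and denominator --- that $R(z)$ is invariant under $\alpha \mapsto -\alpha$, so each $r_{n}(\alpha) = [z^{n-1}] R(z)$ is even.

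I do not anticipate a genuine obstacle: the computation behind the functional equation is short, and the decisive point is simply recognizing that~\eqref{eq:prob-recurrence-2} is the coefficientwise form of $R = P\,(1 + (1-\alpha)R)$. The only places demanding care are the index bookkeeping in the Cauchy product $[z^{n-1}](PR)$ and keeping the substitution $u = 1-\alpha^{2}$ consistent when arguing evenness.
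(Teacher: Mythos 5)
Your proposal is correct and takes essentially the same route as the paper: both arguments start from the symmetric form~\eqref{eq:bgf-symmetric}, clear the denominator, and compare coefficients of $z^{n-1}$ --- the only difference is that you rearrange at the series level first (using the cancellation of the $E$-terms to $2\alpha$, so the coefficient extraction from $R = P\,(1+(1-\alpha)R)$ yields the recurrence in solved form), whereas the paper extracts coefficients from the implicit identity and then solves for $r_{n}(\alpha)$, which is the same algebra performed in the opposite order. As a small bonus, you also justify the evenness of the polynomials $r_{n}(\alpha)$ (via $u = 1-\alpha^{2}$, or the invariance of $R$ under $\alpha \mapsto -\alpha$), a point the paper's proof leaves implicit.
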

\begin{proof}
  From~\eqref{eq:bgf-symmetric}, and by the definition of $W(z,u)$, we find
  \[ \frac{\exp(2z\alpha) - 1}{\exp(2z\alpha) (\alpha - 1) + \alpha + 1} = \sum_{n\geq 2}
    z^{n-1} w_{n}(u)/u = \sum_{n\geq 2} z^{n-1} r_{n}(\alpha).  \]
  After multiplying with the denominator and extracting the coefficient of $z^{n-1}$ on
  both sides, we obtain
  \[ \frac{(2\alpha)^{n-1}}{(n-1)!} = (\alpha + 1) r_{n}(\alpha) + (\alpha - 1) \sum_{k=0}^{n-1}
    \frac{(2\alpha)^{k}}{k!} r_{n-k}(\alpha),  \]
  valid for $n\geq 2$. Rearranging this equation then yields~\eqref{eq:prob-recurrence-2}.
\end{proof}

\bibliographystyle{bibstyle/amsplainurl}
\bibliography{cheub-bib/cheub}

\end{document}